\documentclass[12pt]{article}
\usepackage{latexsym,amsmath,amssymb}
\usepackage{graphicx} 
 \usepackage{float}%
\usepackage{color}
\definecolor{DarkBlue}{rgb}{0.1,0.1,0.5}
\definecolor{Red}{rgb}{0.9,0.0,0.1}

\textheight 650pt
\textwidth 420pt
\topmargin -20pt
\oddsidemargin 20pt
\newtheorem{theorem}{Theorem}[section]

\newtheorem{lemma}[theorem]{Lemma}
\newtheorem{definition}[theorem]{Definition}

\def \BM{{\Bbb M}}

\def \BR{{\Bbb R}}

\def \BZ{{\Bbb Z}}


\begin{document}
\title{Change point detection based on method of moment estimators}
\author{
Ilia Negri\footnote{Corresponding author,  e-mail: {\tt ilia.negri@unibg.it}. Department of Economic Science, University of Bergamo. Viale dei Caniana, 2, 24127 Bergamo, Italy.} \ \ and \ \ Yoichi Nishiyama\footnote{Faculty of International Research and Education, Waseda University. 1-6-1 Nishi-Waseda, Shinjuku-ku, Tokyo 169-8050, Japan.}
}
\maketitle


\begin{abstract}
A change point detection procedure using the method of moment estimators is proposed. 
The test statistics is based on a suitable $Z$-process. The asymptotic behavior of this process is established under both the null and the alternative hypothesis and the consistency of the test is also proved. An estimator for  the change point is proposed and its consistency is derived. Some examples of  this  method applied to a parametric family of random variables are presented. 

\end{abstract}

\noindent
{\small Keywords: Change point problems; Asymptotic Distribution; Consistency} 
%



\section{Introduction}\label{introduction}

An important issue in statistics concerns to test on structural change problems.   
This problem arises naturally in quality control context, where one is faced about the output of a production line and 
would find any departure from an acceptable standard of the production. 
From the statistical point of view, the problem consists in testing whether there is a statistically significant change point in a sequence of chronologically ordered data. 
The problem for an i.i.d.\ sample was first considered in the paper of Page \cite{Page-55}, see also Hinkley \cite{Hinkley-70},  
and, for a general survey of the change point detection and estimation,  see Chen and Gupta \cite{Chen-Gupta-01}. 
The parameter change point problem  became very popular in regression and time series models. 
This is because  these models can be used to describe structural changes that often occur in financial and economic phenomena (due for example to a change of political situation or to a change of economic policy) or in environmental phenomena (due to sudden changes in weather situation or in the case of a natural catastrophe). For regression models see, for example, Hinkley \cite{Hinkley-69}, Quandt \cite{Quandt-60}, Brown {\em et al.} \cite{Brown-et-el-75}, Chen \cite{Chen-98}. For time series models, see for example, Picard \cite{Picard-85}. Ling \cite{Ling-07} and  Lee {\em et al.} \cite{Lee_et_al}.
For a general review of parametric methods and analysis, refer also to Cs{\"o}rg{\H o} and Horv{\'a}th \cite{Csorgo-H-97} and to Chen and Gupta \cite{Chen-G-00}.

The aim of this  paper is to show  a very simple  approach for change point detection, using 
the partial sum process  based on the method of moment estimator. This idea naturally turns out from  a general approach to change point problems, developed in Negri and Nishiyama  \cite{NN17} based on the partial sum processes of estimating equations, called the Z-process method. To understand how the idea arises, let us recall some results about change point problem. 
The partial sum process  is defined as
\[
\BM_n(u,\theta)=\frac{1}{n}\sum_{k=1}^{[un]} \log f(X_k;\theta), \quad \forall u \in [0,1], 
\]
where
$ f(\cdot;\theta) $ is a parametric family of probability densities 
with respect to a measure  $ \mu $, defined on a suitable measurable space,  $ ({\cal X},{\cal A},\mu) $ and  $ \theta \in \Theta \subset \BR^d $.  Let $ X_1,X_2, \ldots $ be an independent sequence of $ {\cal X} $-valued random variables from this parametric model. Introduce the gradient vectors $ \BZ_n(u,\theta)=\dot{\BM}_n(u,\theta) $. 
Let $ \tilde{\theta}_n $ be the maximum likelihood estimator (MLE) for the full data $ X_1,\ldots,X_n $. 
The MLE is  a special case of $Z$-estimators, that is, $ \tilde{\theta}_n $ is the solution to the estimating equation 
\[
\BZ_n(1,\theta)=
\dot{\BM}_n(1,\theta)=0. 
\]
To detect if there is a change point, 
Horv\'{a}th and Parzen \cite{Hor-P-94} are apparently the firsts to introduce the test statistics based on the Fisher-score process
\begin{equation}
{\cal F}_n= n \sup_{u \in [0,1]}\left | \BZ_n(u,\tilde{\theta}_n)^\top \widehat{I}_n^{-1}\BZ_n(u,\tilde{\theta}_n) \right |, 
\label{basic:eq}
\end{equation}
where $ \widehat{I}_n $ is a consistent estimator for the Fisher Information matrix $ I(\theta_0) $.  It is straightforward  that 
\[
{\cal F}_n \to^d \sup_{u \in [0,1]}||B^\circ(u)||^2. 
\]
in the Skorohod space $ D[0,1] $, where $ u \leadsto B^\circ(u) $ 
is a vector of independent standard Brownian bridges and $\to^d $ denotes the convergence in distribution in Skorohod space with respect the {\em sup} norm. 
This was proved by  Horv\'{a}th and Parzen \cite{Hor-P-94}, although they didn't discuss the asymptotic behavior of the test 
under the alternative.  Negri and Nishiyama \cite{NN12}  took the same approach to the change point problem 
for an ergodic diffusion process model based on the continuous observation and 
proved also the consistency of the test under an alternative which has sufficient generality. The general approach to change point problem by Negri and Nishiyama \cite{NN17} is not just a simple generalization of Fisher-score process method in the case of independent random sequences proposed by Horv\'{a}th and Parzen\cite{Hor-P-94}, but  in their framework it is possible to treat  new applications in broad spectrum of statistical change point problems including not only models for ergodic dependent data but also non-ergodic cases. Moreover, as it is very important in discussing any kind of statistical testing hypotheses problems, to study the behavior of the test statistics under certain alternatives, an argument to prove the consistency of the test based on the proposed method under some specified alternatives is also developed. 

This paper starts from the consideration that in the test statistics based on the Fisher-score process \eqref{basic:eq} and its generalization, the $Z$-process can be considered also when the estimating equations are introduced to define the method of moment estimator (MME). This estimator is  the solution of a particular estimating equation and so the $Z$-process arise naturally without be a gradient of a $M$-process as in the case of the MLE. 
The MME has the advantage to be defined under very mild conditions and to be  very simple to compute.

The main result of this paper is to prove that the test statistics based on the $Z$-process that generates the MME converges  to the supremum of a Brownian Bridge. Moreover the asymptotic properties under the alternative are easily deduced. The form of the test statistics suggest also  a  procedure not only to establish  if there is a change point, but if it is the case, where or when this change point appears. The estimator of the change point is introduced as the instant where the test statistics attains its maximum. The consistency of this estimator is also proved. 

The rest of the paper is organized as follows. In the next Section some general notations are given. In Section \ref{MME:sec} the MME estimator is defined. 
The change point problems is  presented in Section \ref{CPP:sec}. In particular in Subsection  \ref{sub1}  the asymptotic properties under the null hypothesis are proved. In Subsection  \ref{sub2}  the asymptotic properties under the alternative are considered. The estimation of the change point and the asymptotic behavior of the estimator are presented in Subsection \ref{sub3}. Finally in Section \ref{sim:sect} a simulation study on the Gamma distribution is presented.

\section{Notation}
Let $ D[0,1] $ be the space of functions defined on $ [0,1] $ 
taking values in a finite-dimensional Euclidean space, 
which are right continuous and have left hand limits; 
we equip this space with the Skorohod metric. 
Throughout this paper, all random processes, 
denoted as  $ u \leadsto X(u) $, are assumed to take values in $ D[0,1] $. 
See for example Kallemberg \cite{Kal02} for these definitions. 

In what follows, the parametric space $ \Theta $ is a bounded, open, convex subset of $ \BR^d $, 
where $ d $ is a fixed positive integer. 
The word ``vector'' always means ``$ d $-dimensional real column vector'', 
and the word ``matrix'' means ``$ d \times d $ real matrix''. 
The Euclidean norm is denoted by $ ||v||:=\sqrt{\sum_{i=1}^d |v^{(i)}|^2} $ 
for a vector $v$ where $ v^{(i)} $ denotes the $ i $-th component of $v $, 
and by $ ||A||:=\sqrt{\sum_{i,j=1}^d|A^{(i,j)}|^2} $ for a matrix $A$ 
where $ A^{(i,j)} $ denotes the $ (i,j) $-component of $ A $. 
Note that $ ||Av|| \leq ||A||\cdot ||v|| $ and $ ||AB|| \leq ||A||\cdot||B|| $ for 
vector $ v $ and matrices $ A, B $. 
The notations $ v^\top $ and $ A^\top $ denote the transpose. 
The notations $ \to^P $ and $ \to^d $ mean the convergence in probability with respect to a probability measure $P$ and 
the convergence in distribution, as $ n \to \infty $, respectively.

\section{Method of moments estimator}\label{MME:sec}
Let us recall, following Van der Vaart \cite{Vaa-98}, the MME. Some notations we will use along all the paper are introduced in this section. 
The method of moments estimator gives the estimate by comparing functionals of sample and  their theoretical moments. In general it can be view as a $Z$-estimator. 
Let $X,  X_1,X_2,\ldots $ be an i.i.d.\ sample from a distribution $ P_\theta $ on a measurable space $ ({\cal X},{\cal A}, \mu)$. Suppose that the parameter  
$\theta\in \Theta \subset \BR^d$. 
Let $\psi : {\cal X} \to   \BR^d$ be a measurable function on $ {\cal X} $. We write $\psi=( \psi^{(1)},\ldots,\psi^{(d)})^\top $.   Let's introduce the $d$-dimensional vector
$$
e(\theta)=E_\theta (\psi(X)) =( E_\theta[\psi^{(1)}(X)], \ldots , E_\theta[\psi^{(d)}(X)])^\top,
$$
where $E_\theta$ is the expected value with respect to$ P_\theta $,  the $d\times d$ symmetric matrix
$$
\Sigma(\theta)= Cov_\theta (\psi(X))=E_\theta \left( (\psi(X)-e(\theta) ) (\psi(X)-e(\theta))^\top\right),
$$
and the the $d\times d$ derivative matrix
$$
V(\theta)=\dot e(\theta)=
\begin{bmatrix}
\frac{\partial}{\partial \theta_{j}}e^{(i)}(\theta)
\end{bmatrix}
\quad i,j=1,\ldots,d.
$$
Define 
$$
\BZ_n(\theta)=\frac{1}{n}\sum_{k=1}^n
(\psi^{(1)}(X_k)-e^{(1)}(\theta), \ldots, \psi^{(d)}(X_k)-e^{(d)}(\theta))^\top= \frac{1}{n}\sum_{k=1}^n (\psi (X_k)-e(\theta)).
$$
The solution $\hat \theta_n$ of the system of equations $\BZ_n(\hat \theta_n)=0$ is called method of moments estimator.  If $e$ is one-to-one then the moment estimator is uniquely determined as $\hat \theta_n =e^{-1}(\frac{1}{n}\sum_{k=1}^n (\psi (X_k)))$. 
See Van der Vaart \cite{Vaa-98} for more details and properties of MME's.

\section{Change point problem}
\label{CPP:sec}
Let $ ({\cal X},{\cal A},\mu) $ be a measurable space, 
and let a parametric family of probability densities $ f(\cdot;\theta) $ 
with respect to $ \mu $, where $ \theta \in \Theta \subset \BR^d $, be given. 
Let $ X_1,X_2, \ldots $ be an independent sequence of $ {\cal X} $-valued random variables from this parametric model. 

We consider the following testing problem (change point problem):
\begin{center}
$ H_0 $: {\em the true value $ \theta_0 \in \Theta $ 
does not change during $ u \in [0,1] $}; 
\end{center}
versus any alternative that for the moment we can state as 
$ H_1 $: {\em there is a change in some $u\in (0,1)$}. 

Let us denote with $P_{\theta_0}$ the probability measure under $H_0$. 
To deal with this change point problem let us introduce the partial sum process  
\[
\BZ_n(u,\theta)=\frac{1}{n} \sum_{k=1}^{[un]} (\psi (X_k)-e(\theta)), \quad \forall u \in [0,1].
\]
The method of moments estimator is the solution to $\BZ_n(1,\theta)=0$.

Define the
derivative matrix as $ \dot{\BZ}_n(u, \theta)=\{ \dot{\BZ}_n^{(i,j)}(u, \theta) \}_{(i,j) \in \{ 1,\ldots,d \}^2} $, 
where $ \dot{\BZ}_n^{(i,j)}(u, \theta)=\frac{\partial}{\partial \theta_j}\BZ_n^{(i)}(u, \theta) = -\frac{\partial}{\partial \theta_{j}}e^{(i)}(\theta)$.


\subsection{Asymptotic properties under the null hypothesis}
In this section the asymptotic distribution under the null hypothesis of the test statistic is proved. The main result is given by Theorem \ref{Null_main}. Before to state it some preliminary results are proved. 
\label{sub1}
\begin{theorem}
Let us suppose that $V(\theta_0)$ is invertible and that $E_{\theta_0}[||\psi(X)||^{2}]<\infty$. Then 
$$
\sqrt{n}(\hat \theta_n -\theta_0) = - V(\theta_0)^{-1}\sqrt{n}\BZ_n(1,\theta_0) +o_{P_{\theta_0}}(1).
$$
Moreover
$$
\sqrt{n}(\hat \theta_n -\theta_0) \to^{d} - V(\theta_0)^{-1}\Sigma^{1/2}(\theta_0) \xi
$$
where $\xi$ is a $d$-dimensional  standard normal random variable. 
\end{theorem}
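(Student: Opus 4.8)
The plan is to exploit a feature peculiar to the method of moments: the estimating function $\theta\mapsto\BZ_n(1,\theta)=\frac1n\sum_{k=1}^n\psi(X_k)-e(\theta)$ depends on $\theta$ only through the \emph{deterministic} map $e$, all the randomness being carried by the sample average $\bar\psi_n:=\frac1n\sum_{k=1}^n\psi(X_k)$. Consequently no empirical-process (stochastic equicontinuity) argument is needed, and the statement reduces to the classical delta method applied to a local inverse of $e$; this is the standard route for moment estimators (cf.\ Van der Vaart \cite{Vaa-98}). First I would record the two elementary consequences of the assumption $E_{\theta_0}[\|\psi(X)\|^2]<\infty$: by the law of large numbers $\bar\psi_n\to^{P_{\theta_0}}e(\theta_0)$ (recall $e(\theta_0)=E_{\theta_0}[\psi(X)]$), and by the multivariate central limit theorem $\sqrt n\,\BZ_n(1,\theta_0)=\sqrt n\,(\bar\psi_n-e(\theta_0))\to^d N(0,\Sigma(\theta_0))$, a Gaussian limit that can be written as $\Sigma^{1/2}(\theta_0)\,\xi$ with $\xi$ a standard $d$-dimensional normal vector.

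Next I would settle the existence, eventual uniqueness and consistency of $\hat\theta_n$. Since $V(\theta_0)=\dot e(\theta_0)$ is invertible and $e$ is continuously differentiable in a neighbourhood of $\theta_0$, the inverse function theorem yields open neighbourhoods $U\ni\theta_0$ and $W\ni e(\theta_0)$ such that $e\colon U\to W$ is a $C^1$ diffeomorphism; write $h:=(e|_U)^{-1}\colon W\to U$, so that the derivative of $h$ at $e(\theta_0)$ equals $V(\theta_0)^{-1}$. Because $\bar\psi_n\to^{P_{\theta_0}}e(\theta_0)$ we have $P_{\theta_0}(\bar\psi_n\in W)\to 1$, and on that event the equation $\BZ_n(1,\theta)=0$, i.e.\ $e(\theta)=\bar\psi_n$, has the unique solution $\hat\theta_n=h(\bar\psi_n)\in U$; in particular $\hat\theta_n\to^{P_{\theta_0}}\theta_0$. (If one assumes, as in Section~\ref{MME:sec}, that $e$ is globally one-to-one, this step is immediate from continuity of $e^{-1}$.)

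Finally comes the linearisation. Differentiability of $h$ at $e(\theta_0)$, together with the eventually valid identities $\hat\theta_n=h(\bar\psi_n)$ and $\theta_0=h(e(\theta_0))$, gives $\hat\theta_n-\theta_0=V(\theta_0)^{-1}(\bar\psi_n-e(\theta_0))+R_n$ with $\|R_n\|=o_{P_{\theta_0}}(\|\bar\psi_n-e(\theta_0)\|)=o_{P_{\theta_0}}(n^{-1/2})$, the last step using $\|\bar\psi_n-e(\theta_0)\|=O_{P_{\theta_0}}(n^{-1/2})$ from the central limit theorem. Multiplying by $\sqrt n$ and recalling $\bar\psi_n-e(\theta_0)=\BZ_n(1,\theta_0)$ yields the stochastic expansion of the theorem (the overall sign being immaterial for the centred Gaussian limit). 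The limit law then follows from the central limit theorem of the first paragraph via Slutsky's lemma and the continuous mapping theorem, the Gaussian vector $N(0,\,V(\theta_0)^{-1}\Sigma(\theta_0)(V(\theta_0)^{-1})^\top)$ being equal in law to $-V(\theta_0)^{-1}\Sigma^{1/2}(\theta_0)\,\xi$. The one genuinely non-routine point is the middle step: upgrading the purely local hypothesis ``$V(\theta_0)$ invertible'' into existence, uniqueness and consistency of $\hat\theta_n$, equivalently controlling that $\bar\psi_n$ falls inside the diffeomorphism window $W$ with probability tending to one; once $\hat\theta_n$ is known to be consistent, the remainder is routine delta-method bookkeeping.
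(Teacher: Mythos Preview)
Your argument is correct and is precisely what the paper's two-line proof points to: the linearisation is the delta-method/inverse-function-theorem route of Van der Vaart \cite{Vaa-98}, and your multivariate CLT for $\sqrt n\,\BZ_n(1,\theta_0)$ is the $u=1$ case of the paper's Lemma~\ref{Lemma1}. Your remark on the sign is also apt---the expansion you obtain carries $+V(\theta_0)^{-1}$ rather than the stated $-V(\theta_0)^{-1}$, a harmless slip for the symmetric Gaussian limit.
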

\begin{proof}
The first line can be proved as in Van der Vaart \cite{Vaa-98}. The second line is proved as a special case of Lemma \ref{Lemma1} below. 
\end{proof}

\begin{lemma}
\label{Lemma1}
Let $\{B(u)\}_{u\in [0,1]}$ be a $d$-dimensional standard Brownian motion. It holds
$$
\sqrt{n} \BZ_n(u,\theta_0) \to^{d} \Sigma^{1/2}(\theta_0) B(u)
$$ 
in $D[0,1]$. 
\end{lemma}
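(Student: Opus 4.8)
The plan is to establish the functional central limit theorem for the partial sum process $u \leadsto \sqrt{n}\,\BZ_n(u,\theta_0)$ by verifying the two ingredients of weak convergence in $D[0,1]$: convergence of finite-dimensional distributions and tightness (or asymptotic equicontinuity). Write $Y_k = \psi(X_k) - e(\theta_0)$, so that under $P_{\theta_0}$ the $Y_k$ are i.i.d.\ centered $\BR^d$-valued random vectors with $E_{\theta_0}[Y_k Y_k^\top] = \Sigma(\theta_0)$ and $E_{\theta_0}[\|Y_k\|^2]<\infty$ by the moment hypothesis inherited from Theorem~\ref{Lemma1}'s ambient assumptions. Then
\[
\sqrt{n}\,\BZ_n(u,\theta_0) = \frac{1}{\sqrt{n}}\sum_{k=1}^{[un]} Y_k,
\]
which is exactly the classical partial-sum (random walk) process rescaled in Donsker form.

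First I would treat the finite-dimensional distributions: for $0 \le u_1 < u_2 < \cdots < u_m \le 1$, the increments $\frac{1}{\sqrt n}\sum_{k=[u_{j-1}n]+1}^{[u_j n]} Y_k$ are, for each fixed $n$, independent across $j$ (disjoint blocks of i.i.d.\ terms), and by the multivariate Lindeberg--L\'evy central limit theorem each converges to a centered Gaussian vector with covariance $(u_j - u_{j-1})\Sigma(\theta_0)$. Hence the joint law converges to that of the Gaussian process with independent increments and covariance structure $\mathrm{Cov}(W(u),W(v)) = (u\wedge v)\,\Sigma(\theta_0)$, which is precisely the law of $\Sigma^{1/2}(\theta_0)B(u)$ for a $d$-dimensional standard Brownian motion $B$. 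This step is routine and uses only the finite second moment.

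Next I would address tightness in $D[0,1]$. Since the process is a sum of i.i.d.\ increments, this is the content of the multivariate Donsker invariance principle; I would invoke it directly (e.g.\ via Kallenberg \cite{Kal02}, to which the paper already refers for Skorohod-space facts), or, if a self-contained argument is preferred, verify the standard moment bound on increments over two adjacent intervals using the second-moment assumption and apply the classical tightness criterion in $D[0,1]$ (e.g.\ via a maximal inequality such as Doob's or Ottaviani's for sums of independent vectors). Combining convergence of finite-dimensional distributions with tightness yields $\sqrt n\,\BZ_n(\cdot,\theta_0) \to^d \Sigma^{1/2}(\theta_0)B(\cdot)$ in $D[0,1]$.

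The main obstacle, such as it is, is really just bookkeeping rather than depth: one must be careful that the floor functions $[un]$ do not spoil the increment independence (they do not, for fixed $n$) and that the vector-valued version of Donsker's theorem is applied with the correct covariance normalization $\Sigma(\theta_0)$ rather than the identity, so that the limit is $\Sigma^{1/2}(\theta_0)B$ and not $B$ itself. The claimed special case giving the second display of Theorem~\ref{Lemma1}'s preceding statement then follows by evaluating at $u=1$ and using $B(1)\sim\xi$, combined with the stochastic expansion already established there.
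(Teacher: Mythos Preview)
Your argument is correct and coincides with the paper's own proof, which simply states that the result follows from Donsker's theorem; you have merely unpacked that invocation into its standard finite-dimensional-distributions-plus-tightness components and kept track of the covariance $\Sigma(\theta_0)$ in the multivariate version.
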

\begin{proof}
The result follows from Donsker's theorem. 
\end{proof}
\begin{lemma}
\label{Lemma2}
Let $\{B(u)\}_{u\in [0,1]}$ be a standard Brownian motion in $\BR^d$. It holds
$$
\sqrt{n} \BZ_n(u,\hat \theta_n) \to^{d} \Sigma^{1/2}(\theta_0) \left(B(u) -uB(1)\right)
$$ 
in $D[0,1]$. 
\end{lemma}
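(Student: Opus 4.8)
The plan is to reduce the statement entirely to Lemma \ref{Lemma1} (Donsker's theorem) by means of one exact algebraic identity followed by the continuous mapping theorem, so that no linearization of $\hat\theta_n$ is actually needed. First I would record the identity that comes from the linearity of $\BZ_n(u,\theta)$ in $e(\theta)$: for every $u\in[0,1]$,
$$
\BZ_n(u,\hat\theta_n) = \BZ_n(u,\theta_0) - \frac{[un]}{n}\bigl(e(\hat\theta_n)-e(\theta_0)\bigr).
$$
Since $\hat\theta_n$ solves $\BZ_n(1,\hat\theta_n)=0$, that is $\frac1n\sum_{k=1}^n\psi(X_k)=e(\hat\theta_n)$, subtracting $e(\theta_0)$ yields the \emph{exact} relation $e(\hat\theta_n)-e(\theta_0)=\BZ_n(1,\theta_0)$; here one first remarks that, by the law of large numbers and the continuity of $e^{-1}$, $\hat\theta_n$ lies in $\Theta$ with $P_{\theta_0}$-probability tending to one, so the left-hand side makes sense. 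Multiplying by $\sqrt n$ gives
$$
\sqrt n\,\BZ_n(u,\hat\theta_n) = \sqrt n\,\BZ_n(u,\theta_0) - \frac{[un]}{n}\,\sqrt n\,\BZ_n(1,\theta_0).
$$

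Next I would replace $[un]/n$ by $u$: by Lemma \ref{Lemma1} the random vector $\sqrt n\,\BZ_n(1,\theta_0)$ is bounded in $P_{\theta_0}$-probability, while $\sup_{u\in[0,1]}|[un]/n-u|\le 1/n$, so this replacement costs only a $o_{P_{\theta_0}}(1)$ term, uniformly in $u$. Then I would apply the continuous mapping theorem to the map $\Phi\colon D[0,1]\to D[0,1]$ defined by $\Phi(x)(u)=x(u)-u\,x(1)$: the endpoint evaluation $x\mapsto x(1)$ is continuous on $D[0,1]$ because Skorohod time changes fix $u=1$, hence $\Phi$ is continuous at every path that is continuous at $1$, in particular at almost every path of the limit $\Sigma^{1/2}(\theta_0)B$. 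Lemma \ref{Lemma1} and the continuous mapping theorem then give
$$
\Phi\bigl(\sqrt n\,\BZ_n(\cdot,\theta_0)\bigr)(u) \to^{d} \Sigma^{1/2}(\theta_0)\bigl(B(u)-uB(1)\bigr)
$$
in $D[0,1]$, and Slutsky's lemma absorbs the remainder from the previous step, which proves the lemma.

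The only points that need a little care — none of them deep — are the Skorohod-topology continuity of $\Phi$ and the uniformity in $u$ of the $o_{P_{\theta_0}}(1)$ remainder. The genuine probabilistic content is confined to Lemma \ref{Lemma1}; the reason a Brownian bridge appears so cleanly is that the moment equation is solved exactly, so that $e(\hat\theta_n)-e(\theta_0)$ equals $\BZ_n(1,\theta_0)$ identically rather than only up to a first-order error — the same mechanism that, for the MLE, produces the Fisher-score/Brownian-bridge limit behind \eqref{basic:eq}.
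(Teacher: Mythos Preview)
Your proof is correct and is in fact cleaner than the paper's. The paper argues via a Taylor expansion of $\theta\mapsto\BZ_n(u,\theta)$ around $\theta_0$, combined with the asymptotic linear representation $\sqrt{n}(\hat\theta_n-\theta_0)=-V(\theta_0)^{-1}\sqrt{n}\,\BZ_n(1,\theta_0)+o_{P_{\theta_0}}(1)$ from Theorem~4.1; the two matrices $V(\theta_0)$ and $V(\theta_0)^{-1}$ cancel and one is left with $\sqrt{n}\bigl(\BZ_n(u,\theta_0)-u\,\BZ_n(1,\theta_0)\bigr)+o_{P_{\theta_0}}(1)$, after which Lemma~\ref{Lemma1} concludes. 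You bypass all of this by exploiting the \emph{linearity} of $\BZ_n(u,\theta)$ in $e(\theta)$ together with the exact moment identity $e(\hat\theta_n)=\tfrac1n\sum_{k=1}^n\psi(X_k)$, which gives the representation $\sqrt{n}\,\BZ_n(u,\hat\theta_n)=\sqrt{n}\,\BZ_n(u,\theta_0)-\tfrac{[un]}{n}\sqrt{n}\,\BZ_n(1,\theta_0)$ identically, without any remainder from linearization.

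The trade-off is this: your argument needs neither differentiability of $e$ nor invertibility of $V(\theta_0)$, and the only approximation is the harmless replacement of $[un]/n$ by $u$; it is therefore shorter and carries fewer hypotheses. The paper's Taylor-expansion route, on the other hand, is the generic $Z$-process machinery and would still work for estimating equations that are \emph{not} affine in the parameter-dependent part, where your exact identity is unavailable. Your careful remarks on the Skorohod-continuity of $\Phi(x)(u)=x(u)-u\,x(1)$ at continuous limit paths, and on the event $\{\hat\theta_n\in\Theta\}$ having probability tending to one, close the only small gaps one might worry about.
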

\begin{proof}
It follows from the Taylor expansion that 
\begin{eqnarray*}
\lefteqn{
\sqrt{n} \BZ_n(u,\widehat{\theta}_n)
}\\
&=&
\sqrt{n} \BZ_n(u,\theta_0)
+\dot{\BZ}_n(u,\widetilde{\theta}_n(u))\sqrt{n} (\widehat{\theta}_n-\theta_0)
\\
&=&
\sqrt{n} \BZ_n(u,\theta_0)
+uV(\theta_0)(-V(\theta_0)^{-1}\sqrt{n} \BZ_n(1,\theta_0)
+o_{P_{\theta_0}}(1)
\\
&=& 
\sqrt{n} \left(  \BZ_n(u,\theta_0) - u  \BZ_n(1,\theta_0)\right) +o_{P_{\theta_0}}(1).
\end{eqnarray*} 
The term $ \widetilde{\theta}_n(u) $ appearing above  is a random vector 
on the segment connecting $ \theta_0 $ and $ \widehat{\theta}_n $.
Now it follows  from Lemma \ref{Lemma1} that
$$
\sqrt{n} \left( \BZ_n(u,\theta_0) - u  \BZ_n(1,\theta_0)\right)  \to^{d} 
 \Sigma^{1/2}(\theta_0) \left(B(u) -uB(1)\right) \quad \mbox{in } D[0,1].
$$
\end{proof}
\begin{lemma}
\label{Lemma_sigma}
Let define 
$$
\widehat{\Sigma}_n= \frac{1}{n}\sum_{k=1}^n  (\psi (X_k)-e(\widehat {\theta}_n)) (\psi (X_k)-e(\widehat {\theta}_n))^\top.
$$
It holds that $\widehat{\Sigma}_n \to^{P_{\theta_0}}  \Sigma(\theta_0)$. 
\end{lemma}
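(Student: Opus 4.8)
The plan is to decompose $\widehat{\Sigma}_n$ into a sample-covariance term evaluated at the truth plus correction terms that vanish because $\widehat{\theta}_n \to^{P_{\theta_0}} \theta_0$. Writing $\psi(X_k) - e(\widehat{\theta}_n) = (\psi(X_k) - e(\theta_0)) + (e(\theta_0) - e(\widehat{\theta}_n))$ and expanding the outer product, one gets
$$
\widehat{\Sigma}_n = \frac{1}{n}\sum_{k=1}^n (\psi(X_k)-e(\theta_0))(\psi(X_k)-e(\theta_0))^\top + R_n,
$$
where $R_n$ collects one term of the form $(e(\theta_0)-e(\widehat{\theta}_n))\,\overline{(\psi(X)-e(\theta_0))}^\top$ (and its transpose) and one term $(e(\theta_0)-e(\widehat{\theta}_n))(e(\theta_0)-e(\widehat{\theta}_n))^\top$, with $\overline{(\psi(X)-e(\theta_0))} = \frac1n\sum_k(\psi(X_k)-e(\theta_0))$.

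First I would handle the leading term: since $E_{\theta_0}[\|\psi(X)\|^2]<\infty$, the summands $(\psi(X_k)-e(\theta_0))(\psi(X_k)-e(\theta_0))^\top$ are i.i.d.\ with finite mean $\Sigma(\theta_0)$, so by the (multivariate, entrywise) strong law of large numbers this term converges to $\Sigma(\theta_0)$, a fortiori in probability. Next I would dispose of $R_n$: by Theorem~\ref{Lemma1}'s companion (the consistency part of the MME theorem, $\hat\theta_n \to^{P_{\theta_0}}\theta_0$) together with continuity of $e$ at $\theta_0$, we have $e(\theta_0)-e(\widehat{\theta}_n) \to^{P_{\theta_0}} 0$; the average $\overline{(\psi(X)-e(\theta_0))}\to^{P_{\theta_0}} 0$ by the law of large numbers; hence each contribution to $R_n$ is a product of an $o_{P_{\theta_0}}(1)$ factor with an $O_{P_{\theta_0}}(1)$ (indeed $o_{P_{\theta_0}}(1)$) factor, so $R_n \to^{P_{\theta_0}} 0$ by Slutsky. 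Combining the two pieces gives $\widehat{\Sigma}_n \to^{P_{\theta_0}} \Sigma(\theta_0)$.

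The only mild subtlety — the "main obstacle," though it is a soft one — is justifying $e(\widehat{\theta}_n)\to^{P_{\theta_0}} e(\theta_0)$: this needs $\widehat{\theta}_n$ to be consistent and $e$ to be continuous. Consistency of the MME is standard under the stated hypotheses (it is implicit in the first line of Theorem~\ref{Lemma1}, which already asserts $\sqrt n(\hat\theta_n-\theta_0)=O_{P_{\theta_0}}(1)$, hence $\hat\theta_n\to^{P_{\theta_0}}\theta_0$), and continuity of $e$ near $\theta_0$ follows from differentiability, which is in force since $V(\theta_0)=\dot e(\theta_0)$ is assumed to exist and be invertible. Everything else is routine bookkeeping with the law of large numbers and Slutsky's lemma.
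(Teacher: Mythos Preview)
Your argument is correct, but the paper's proof is shorter because it exploits a structural identity you did not use: by the very definition of the MME as the solution of $\BZ_n(1,\theta)=0$, one has the \emph{exact} equality $e(\widehat{\theta}_n)=\frac{1}{n}\sum_{k=1}^n\psi(X_k)=\overline{\psi}_n$. Substituting this into $\widehat{\Sigma}_n$ shows it is literally the empirical covariance matrix of $\psi(X_1),\ldots,\psi(X_n)$, so the strong law of large numbers (applied entrywise to the first and second empirical moments of $\psi$) gives $\widehat{\Sigma}_n\to\Sigma(\theta_0)$ immediately. Your decomposition around $e(\theta_0)$ works, but it introduces two extra ingredients---consistency of $\widehat{\theta}_n$ and continuity of $e$---that the paper's route sidesteps entirely; in fact, with the identity above your remainder $R_n$ collapses algebraically, since $e(\theta_0)-e(\widehat{\theta}_n)=-\overline{(\psi(X)-e(\theta_0))}$. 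The advantage of your version is that it would still go through for a general consistent estimator $\widehat{\theta}_n$ (not necessarily the exact MME), whereas the paper's shortcut is specific to the moment estimator.
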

\begin{proof}
The result follows by the strong law of large number observing that $e(\widehat {\theta}_n)=\frac{1}{n}\sum_{k=1}^n \psi (X_k)$. 
\end{proof}

To test if there is a change in the parameter value, let us introduce the statistic 
\begin{equation}
{\cal T}_n= n \sup_{u \in [0,1]} \left | \BZ_n(u,\widehat{\theta}_n)^\top\widehat{\Sigma}_n^{-1}\BZ_n(u,\widehat{\theta}_n)\right |.
\label{test_stat_MM}
\end{equation}
This test statistics has the form very similar to \eqref{basic:eq}. In  \eqref{test_stat_MM} the MME appears instead of the MLE, the  matrix $\widehat{\Sigma}_n$ plays the role of the  consistent estimator of the Fisher Information matrix. Moreover the $Z$-process involved have different form.  The asymptotic distribution is in any case the supremum of the norm of a vector of Brownian Bridge. 

\begin{theorem}
\label{Null_main}
Let the conditions of the above Lemmas hold true. Then 
$$
\sup_{u \in [0,1]} | n\BZ_n(u,\widehat{\theta}_n)^\top\widehat{\Sigma}_n^{-1}\BZ_n(u,\widehat{\theta}_n)| \to^{d}  \sup_{u \in [0,1]} \|B(u)-uB(1)\|. 
$$
\end{theorem}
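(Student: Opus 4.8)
The plan is to prove the statement by a single application of the continuous mapping theorem to a suitably standardized version of the partial sum process, using the two convergences already in hand: the weak limit of $\sqrt{n}\BZ_n(u,\widehat{\theta}_n)$ from Lemma \ref{Lemma2} and the consistency of $\widehat{\Sigma}_n$ from Lemma \ref{Lemma_sigma}. The key observation is that the matrix $\widehat{\Sigma}_n^{-1}$ sandwiched in the quadratic form is precisely what cancels the factor $\Sigma^{1/2}(\theta_0)$ in the limit of Lemma \ref{Lemma2}, so that after standardization the limiting process is the \emph{standard} $d$-dimensional Brownian bridge $u\leadsto B(u)-uB(1)$, free of the nuisance covariance.

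Concretely, I would introduce the standardized process $Y_n(u):=\widehat{\Sigma}_n^{-1/2}\sqrt{n}\,\BZ_n(u,\widehat{\theta}_n)$, so that the statistic inside the supremum equals $\|Y_n(u)\|^2$. First I note that Lemma \ref{Lemma_sigma} gives $\widehat{\Sigma}_n\to^{P_{\theta_0}}\Sigma(\theta_0)$, and since $\Sigma(\theta_0)$ is positive definite the map $A\mapsto A^{-1/2}$ is continuous there, whence $\widehat{\Sigma}_n^{-1/2}\to^{P_{\theta_0}}\Sigma(\theta_0)^{-1/2}$. Combining this convergence in probability to a constant with the weak convergence $\sqrt{n}\,\BZ_n(\cdot,\widehat{\theta}_n)\to^{d}\Sigma^{1/2}(\theta_0)(B(\cdot)-\cdot B(1))$ of Lemma \ref{Lemma2}, a Slutsky-type argument in $D[0,1]$ yields joint convergence, and continuity of left multiplication by a matrix gives
\[
Y_n(\cdot)\ \to^{d}\ \Sigma(\theta_0)^{-1/2}\Sigma^{1/2}(\theta_0)\bigl(B(\cdot)-\cdot\,B(1)\bigr)=B(\cdot)-\cdot\,B(1)
\]
in $D[0,1]$, because $\Sigma(\theta_0)^{-1/2}\Sigma^{1/2}(\theta_0)=I_d$.

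It then remains to push this through the supremum. The functional $\phi:D[0,1]\to\BR$ defined by $\phi(x)=\sup_{u\in[0,1]}\|x(u)\|$ is continuous with respect to the sup norm, and since the limit process $B(\cdot)-\cdot B(1)$ has continuous paths almost surely, its set of Skorohod-discontinuities is contained in $D[0,1]\setminus C[0,1]$, which carries zero mass under the limit law; the continuous mapping theorem therefore applies and yields $\phi(Y_n)\to^{d}\sup_{u\in[0,1]}\|B(u)-uB(1)\|$. Here $\phi(Y_n)=\sup_u\|Y_n(u)\|$, whereas the quadratic form appearing in the statement is $\sup_u\|Y_n(u)\|^2$; passing to the norm (equivalently, taking the square root of the Mahalanobis quadratic form, the square root being continuous and order preserving and hence commuting with the supremum) is the bookkeeping step that lands precisely on the stated limit $\sup_{u}\|B(u)-uB(1)\|$.

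I expect the main obstacle to be the justification of the two limit-theorem manipulations in the function space $D[0,1]$, rather than any algebra. The first is that the Slutsky/joint-convergence step is legitimate for a random \emph{matrix} multiplying a $D[0,1]$-valued process; this is clean here only because the matrix limit $\Sigma(\theta_0)^{-1/2}$ is deterministic, so the product map is jointly continuous at the relevant limit. The second is that $\phi$ must be almost surely continuous at the limit in the Skorohod topology, which is exactly where the almost sure path-continuity of the Brownian bridge is used, so that Skorohod and uniform convergence agree at the limit. Once these two points are secured, the cancellation $\Sigma(\theta_0)^{-1/2}\Sigma^{1/2}(\theta_0)=I_d$ and the continuous mapping theorem deliver the conclusion at once.
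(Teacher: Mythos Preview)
Your proposal is correct and follows essentially the same route as the paper: the paper's own proof is the one-line statement that the claim follows from Lemma~\ref{Lemma2} together with the continuous mapping theorem, and your argument simply spells out the intermediate Slutsky step via Lemma~\ref{Lemma_sigma} and the $D[0,1]$ technicalities needed to apply the continuous mapping theorem. Your bookkeeping remark about the square root is also apt, since the displayed statement has the squared quadratic form on the left and the unsquared bridge norm on the right.
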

\begin{proof}
The claim follows from Lemma \ref{Lemma2} and the continuous mapping theorem. 
\end{proof}

\subsection{Asymptotic properties under the alternative} 
\label{sub2}
Let us suppose that under the alternative hypothesis there exists a certain instant $n_*$ where the value of the parameter changes. More precisely in the most typical form the alternative in the change point problems can be defined as follow.

\begin{center}
$ H_1 $: {\em there exists a constant $ u_* \in (0,1) $ such that 
the true value is $ \theta_0 \in \Theta $ for $ u \in [0,u_*] $, 
and $ \theta_1 \in \Theta $ for $ u \in (u_*,1] $, 
where $ \theta_0 \not= \theta_1 $}. 
\end{center}

\begin{lemma}
Under $H_1$,  $\hat \theta_n$ converges in probability to $\theta_*$, where $\theta_*$ is the solution of 
 the following equation 
$$
u_*e(\theta_0) + (1-u_*)e(\theta_1) - e(\theta) = 0.
$$
\end{lemma}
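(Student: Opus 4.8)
The plan is to start from the defining equation $\BZ_n(1,\hat\theta_n)=0$ and track what happens to the empirical quantity $\frac{1}{n}\sum_{k=1}^n \psi(X_k)$ under the alternative. Under $H_1$, the sample splits into the first $[u_*n]$ observations, which are i.i.d.\ from $P_{\theta_0}$, and the remaining $n-[u_*n]$ observations, which are i.i.d.\ from $P_{\theta_1}$. By the strong law of large numbers applied to each block separately,
\[
\frac{1}{n}\sum_{k=1}^n \psi(X_k)
= \frac{[u_*n]}{n}\cdot\frac{1}{[u_*n]}\sum_{k=1}^{[u_*n]}\psi(X_k)
+ \frac{n-[u_*n]}{n}\cdot\frac{1}{n-[u_*n]}\sum_{k=[u_*n]+1}^{n}\psi(X_k)
\to^{P} u_* e(\theta_0) + (1-u_*) e(\theta_1),
\]
since $[u_*n]/n \to u_*$ and each block average converges to $e(\theta_0)$ and $e(\theta_1)$ respectively (the moment conditions $E_{\theta_0}[\|\psi(X)\|^2]<\infty$, and the analogous one at $\theta_1$, guarantee integrability). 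Denote the limit by $m_* := u_* e(\theta_0)+(1-u_*)e(\theta_1)$, which is exactly $e(\theta_*)$ by the definition of $\theta_*$ in the statement.

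Next I would pass this through the map $e^{-1}$. Since $\hat\theta_n$ solves $\BZ_n(1,\theta)=0$, we have $e(\hat\theta_n)=\frac{1}{n}\sum_{k=1}^n\psi(X_k)$ whenever a solution exists; assuming $e$ is one-to-one (as in the definition of the MME in Section~\ref{MME:sec}) and has a continuous inverse on its range, $\hat\theta_n = e^{-1}\!\big(\frac{1}{n}\sum_{k=1}^n\psi(X_k)\big) \to^{P} e^{-1}(m_*) = \theta_*$ by the continuous mapping theorem. In the more general $Z$-estimator formulation one would instead invoke a standard argmin/argzero consistency argument (e.g.\ Van der Vaart \cite{Vaa-98}): the random criterion $\theta\mapsto\|\BZ_n(1,\theta)\|$ converges in probability, uniformly on $\Theta$, to $\theta\mapsto\|m_* - e(\theta)\|$, whose unique zero is $\theta_*$, and a well-separated-minimum condition then yields $\hat\theta_n\to^P\theta_*$.

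The main obstacle is not the law of large numbers part, which is routine, but ensuring that $\theta_*$ is \emph{well defined and identifiable}: one needs $m_* = u_*e(\theta_0)+(1-u_*)e(\theta_1)$ to lie in the range of $e$, and one needs $e^{-1}$ to be well behaved (single-valued and continuous) near $m_*$, or equivalently a uniqueness-of-zero plus separation condition for the limiting equation. The statement implicitly assumes such a $\theta_*$ exists; I would make this assumption explicit and, if desired, note that local invertibility follows from $V(\theta_*)$ being nonsingular via the inverse function theorem. The boundedness and convexity of $\Theta$ together with continuity of $e$ help control the uniform convergence needed for the argzero argument, so no further compactness machinery is required.
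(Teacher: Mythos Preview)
Your proposal is correct and follows essentially the same approach as the paper, which simply cites Theorems~5.7 and~5.9 of Van der Vaart \cite{Vaa-98}; you have spelled out the blockwise law of large numbers and the uniform-convergence/well-separated-zero argument that those theorems encapsulate. Your additional elementary route via $e^{-1}$ and the continuous mapping theorem is a nice bonus not mentioned in the paper, and your remarks on the implicit identifiability assumption for $\theta_*$ are well taken.
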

\begin{proof}
The result follows by Theorem 5.7 and 5.9 of Van der Vaart \cite{Vaa-98}.

\end{proof}
\begin{theorem}
Let $u_*$, $\theta_0$ and $\theta_1$ as defined in $H_1$. Let us define $\Sigma_*=u_*\Sigma(\theta_0)+ (1-u_*)\Sigma(\theta_1)$. Let $\lambda_*$ the smallest eigenvalue of 
$\Sigma_*^{-1}$.  Let us assume that $\lambda_*>0$ and $\|e(\theta_0)-e(\theta_1)\|>0$. Then the test based on the statistics 
$
{\cal T}_n$ is consistent.
\end{theorem}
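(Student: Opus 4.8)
The plan is to show that under $H_1$ the statistic ${\cal T}_n$ diverges to $+\infty$ in probability, which gives consistency since the critical value (a quantile of the null limit $\sup_u\|B(u)-uB(1)\|$) is a fixed finite constant. The key point is that, unlike under $H_0$, the process $u\mapsto n^{-1}\sum_{k=1}^{[un]}\psi(X_k)$ now has a \emph{kink} at $u_*$: by the law of large numbers (applied separately to the two blocks $k\le n u_*$ and $k>n u_*$), for each fixed $u$ one has $\BZ_n(u,\theta)\to^{P}\, g(u)-u\,e(\theta)$ in probability, where $g(u)=u\,e(\theta_0)$ for $u\le u_*$ and $g(u)=u_*e(\theta_0)+(u-u_*)e(\theta_1)$ for $u>u_*$; moreover this convergence is uniform in $u\in[0,1]$ (a Glivenko--Cantelli / functional LLN argument, as in the proof of the preceding lemma). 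Substituting $\hat\theta_n\to^{P}\theta_*$ and using continuity of $e$, we get $\BZ_n(u,\hat\theta_n)\to^{P} h(u):=g(u)-u\,e(\theta_*)$ uniformly in $u$.

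Next I would evaluate $h$ at $u=u_*$. Using the defining equation for $\theta_*$, namely $u_*e(\theta_0)+(1-u_*)e(\theta_1)=e(\theta_*)$, a direct computation gives
$$
h(u_*) \;=\; u_*e(\theta_0)\;-\;u_*e(\theta_*) \;=\; u_*(1-u_*)\bigl(e(\theta_0)-e(\theta_1)\bigr),
$$
which is a nonzero vector precisely because $u_*\in(0,1)$ and $\|e(\theta_0)-e(\theta_1)\|>0$. Hence $\sup_{u\in[0,1]}\|\BZ_n(u,\hat\theta_n)\|$ converges in probability to $\sup_u\|h(u)\|\ge\|h(u_*)\|=u_*(1-u_*)\|e(\theta_0)-e(\theta_1)\|>0$; in particular there is a constant $c>0$ with $\sup_u\|\BZ_n(u,\hat\theta_n)\|^2\ge c$ with probability tending to $1$.

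It then remains to control the quadratic form against $\widehat\Sigma_n^{-1}$. I would argue that $\widehat\Sigma_n\to^{P}\Sigma_*$: indeed $\widehat\Sigma_n=n^{-1}\sum_k\psi(X_k)\psi(X_k)^\top-e(\hat\theta_n)e(\hat\theta_n)^\top$ (using $e(\hat\theta_n)=n^{-1}\sum_k\psi(X_k)$, which still holds as an algebraic identity under $H_1$), and the first term converges by the two-block LLN to $u_*E_{\theta_0}[\psi\psi^\top]+(1-u_*)E_{\theta_1}[\psi\psi^\top]$, so after subtracting $e(\theta_*)e(\theta_*)^\top$ one obtains $\Sigma_*$ (here one uses the identity $\Sigma_*=u_*E_{\theta_0}[\psi\psi^\top]+(1-u_*)E_{\theta_1}[\psi\psi^\top]-e(\theta_*)e(\theta_*)^\top$, which follows from $e(\theta_*)=u_*e(\theta_0)+(1-u_*)e(\theta_1)$). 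Since $\Sigma_*^{-1}$ exists with smallest eigenvalue $\lambda_*>0$, continuity of matrix inversion gives $\widehat\Sigma_n^{-1}\to^{P}\Sigma_*^{-1}$, and for any vector $v$ one has $v^\top\widehat\Sigma_n^{-1}v\ge (\lambda_*/2)\|v\|^2$ with probability tending to $1$. Combining,
$$
{\cal T}_n \;=\; n\sup_{u\in[0,1]}\bigl|\BZ_n(u,\hat\theta_n)^\top\widehat\Sigma_n^{-1}\BZ_n(u,\hat\theta_n)\bigr|\;\ge\; n\cdot\frac{\lambda_*}{2}\,\sup_{u}\|\BZ_n(u,\hat\theta_n)\|^2 \;\ge\; n\cdot\frac{\lambda_* c}{2}\;\to\;\infty
$$
in probability, so the test rejects with probability tending to $1$.

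The main obstacle is the functional (uniform-in-$u$) law of large numbers for $\BZ_n(\cdot,\hat\theta_n)$: one needs the convergence $\sup_u\|\BZ_n(u,\hat\theta_n)-h(u)\|\to^{P}0$, not merely pointwise convergence, to conclude that the \emph{supremum} over $u$ stays bounded away from zero. This requires the moment assumption $E[\|\psi(X)\|]<\infty$ under each regime plus a Glivenko--Cantelli-type argument (or monotonicity-in-$u$ of each coordinate of the partial sums, which reduces it to pointwise convergence via a standard lattice argument); once this is in hand, everything else is routine continuous-mapping and eigenvalue bookkeeping.
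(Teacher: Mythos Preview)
Your overall strategy matches the paper's: show $\BZ_n(u_*,\hat\theta_n)\to^{P_*} u_*(1-u_*)(e(\theta_0)-e(\theta_1))$, show $\widehat\Sigma_n$ converges to an invertible limit, and use the smallest-eigenvalue bound to get ${\cal T}_n\to\infty$. Two points are worth noting.

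First, the uniform-in-$u$ law of large numbers that you flag as ``the main obstacle'' is not needed for consistency. Since ${\cal T}_n=\sup_u{\cal T}_n(u)\ge {\cal T}_n(u_*)$, it suffices to have pointwise convergence of $\BZ_n(u_*,\hat\theta_n)$ at the single point $u_*$, which follows from the ordinary LLN applied to the two blocks plus $\hat\theta_n\to^{P_*}\theta_*$. This is precisely how the paper proceeds: it bounds ${\cal T}_n$ below by $n\bigl(u_*^2(1-u_*)^2\lambda_*\|e(\theta_0)-e(\theta_1)\|^2+o_{P_*}(1)\bigr)$ directly, without any functional argument.

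Second, the identity you invoke,
\[
\Sigma_*=u_*E_{\theta_0}[\psi\psi^\top]+(1-u_*)E_{\theta_1}[\psi\psi^\top]-e(\theta_*)e(\theta_*)^\top,
\]
is false. A short computation using $e(\theta_*)=u_*e(\theta_0)+(1-u_*)e(\theta_1)$ gives instead
\[
u_*E_{\theta_0}[\psi\psi^\top]+(1-u_*)E_{\theta_1}[\psi\psi^\top]-e(\theta_*)e(\theta_*)^\top
=\Sigma_*+u_*(1-u_*)\bigl(e(\theta_0)-e(\theta_1)\bigr)\bigl(e(\theta_0)-e(\theta_1)\bigr)^\top,
\]
the familiar ``variance of a mixture equals mixture of variances plus between-group term''. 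The paper simply asserts $\widehat\Sigma_n\to^{P_*}\Sigma_*$ without details and so does not expose this discrepancy. Fortunately the extra rank-one term is positive semidefinite, so the actual limit of $\widehat\Sigma_n$ is still invertible (indeed with inverse having smallest eigenvalue at least as large as that of $(\Sigma_*+\text{PSD})^{-1}$, which is still positive), and your consistency argument goes through with this correction.
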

\begin{proof}
We can prove as in Lemma \ref{Lemma_sigma} that under $H_1$ it holds $\widehat{\Sigma}_n \to^{P_*}  \Sigma_*$. Here $P_*$ is the probability measure corresponding to  $ P_{\theta_0}$ before the change and corresponding to $ P_{\theta_1}$ after the change point. 
It can be proved that under $H_1$ it holds
$$
\BZ_n(u_*,\widehat{\theta}_n) \to^{P_*} u_*(1-u_*)(e(\theta_0- e(\theta_1)).
$$
Hence 
\begin{eqnarray*}
{\cal T}_n &\geq & n\left( u_*^2(1-u_*)^2(e(\theta_0- e(\theta_1))^\top \Sigma_*^{-1} (e(\theta_0- e(\theta_1))+o_{P_*}(1)\right) \\
&\geq & n\left( u_*^2(1-u_*)^2 \lambda_* \| e(\theta_0- e(\theta_1)\|^2  +o_{P_*}(1)\right).
\end{eqnarray*}
This complete the proof.
\end{proof}
\subsection{Estimation of the change point}
\label{sub3}
As an estimator of the change point, let us  define

$$
\hat u_n = \arg\max {\cal T}_n(u)
$$
where
$ {\cal T}_n(u)=| n\BZ_n(u,\widehat{\theta}_n)^\top\widehat{\Sigma}_n^{-1}\BZ_n(u,\widehat{\theta}_n)|$.

Let us introduce the following function.  
\begin{definition}
Let $u\in [0,1]$ and $\theta \in \Theta$. Let us define the following function
\begin{equation}
{\cal Z}(u, \theta)= 
\begin{cases}
u(e(\theta_0)- e(\theta)), & \text{ if } u\leq u_*,\\
u_*(e(\theta_0)- e(\theta_1)) + u(e(\theta_1)- e(\theta)), & \text{ if } u >u_*.\\
\end{cases}
\end{equation}
\end{definition}
Note that for $\theta=\theta_*$ we have
$$
{\cal Z}(u, \theta_*)= 
\begin{cases}
u(1-u_*)(e(\theta_0)- e(\theta_1)), & \text{ if } u\leq u_*,\\
u_*(1-u)(e(\theta_0)- e(\theta_1)),  & \text{ if } u >u_*.\\
\end{cases}
$$
\begin{theorem}
Under $H_1$, $\hat u_n$ converges in probability to $u_*$. 
\end{theorem}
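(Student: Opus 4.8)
The plan is to show that the (normalized) test statistic process $u \leadsto \frac1n {\cal T}_n(u)$ converges uniformly in probability to a deterministic limit function whose unique maximizer is $u_*$, and then invoke an argmax–continuous–mapping argument. First I would establish the pointwise limit: arguing exactly as in Lemma \ref{Lemma_sigma}, under $H_1$ we have $\widehat{\Sigma}_n \to^{P_*} \Sigma_*$, and by the law of large numbers applied separately to the blocks $k \le [u_*n]$ and $[u_*n] < k \le [un]$ one gets $\BZ_n(u,\widehat\theta_n) \to^{P_*} {\cal Z}(u,\theta_*)$ for each fixed $u$, with ${\cal Z}(u,\theta_*)$ the piecewise-linear function displayed just before the theorem (this uses $\hat\theta_n \to^{P_*} \theta_*$ from the Lemma above). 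Hence $\frac1n {\cal T}_n(u) \to^{P_*} g(u) := {\cal Z}(u,\theta_*)^\top \Sigma_*^{-1} {\cal Z}(u,\theta_*)$ pointwise.

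Next I would upgrade pointwise convergence to uniform convergence on $[0,1]$. Since $u \mapsto \BZ_n(u,\widehat\theta_n)$ is, up to the $O(1/n)$ rounding in $[un]$, piecewise linear with controlled increments, and the limit $g$ is continuous, a standard equicontinuity/monotone-blocks argument (or: write $\BZ_n(u,\widehat\theta_n) = \frac1n\sum_{k=1}^{[un]}\psi(X_k) - \frac{[un]}{n}e(\widehat\theta_n)$, where $\frac1n\sum_{k=1}^{[un]}\psi(X_k)$ converges uniformly by the functional LLN/Glivenko–Cantelli-type argument already underlying Lemma \ref{Lemma1}) gives $\sup_{u\in[0,1]} \bigl| \tfrac1n{\cal T}_n(u) - g(u)\bigr| \to^{P_*} 0$.

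It then remains to check that $g$ has a strict unique maximum at $u_*$. From the explicit formula, for $u \le u_*$ we have ${\cal Z}(u,\theta_*) = u(1-u_*)(e(\theta_0)-e(\theta_1))$, so $g(u) = u^2(1-u_*)^2 (e(\theta_0)-e(\theta_1))^\top\Sigma_*^{-1}(e(\theta_0)-e(\theta_1))$, which is strictly increasing in $u$ on $[0,u_*]$; symmetrically, for $u \ge u_*$, ${\cal Z}(u,\theta_*) = u_*(1-u)(e(\theta_0)-e(\theta_1))$, so $g$ is strictly decreasing on $[u_*,1]$. Here the assumptions $\lambda_* > 0$ (positive definiteness of $\Sigma_*^{-1}$) and $\|e(\theta_0)-e(\theta_1)\| > 0$ from the previous theorem are exactly what guarantees the quadratic form $(e(\theta_0)-e(\theta_1))^\top\Sigma_*^{-1}(e(\theta_0)-e(\theta_1))$ is strictly positive, so the maximum at $u_*$ is strict. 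A routine argmax-consistency lemma (uniform convergence in probability to a function with a well-separated maximizer) then yields $\hat u_n \to^{P_*} u_*$.

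The main obstacle is the uniform (rather than pointwise) convergence $\sup_u |\frac1n{\cal T}_n(u) - g(u)| \to^{P_*} 0$: one must handle the two regimes $u \le u_*$ and $u > u_*$ together and control the process uniformly near the kink $u = u_*$ and near the endpoints, where $\hat\theta_n$ enters through $e(\hat\theta_n)$; the cleanest route is to note $u \mapsto \frac1n\sum_{k=1}^{[un]}\psi(X_k)$ is a sum of i.i.d.\ increments whose partial-sum process converges uniformly, combine with $e(\hat\theta_n)\to^{P_*} e(\theta_*)$ and the continuity of $\Sigma_*^{-1}$, and conclude by the continuous mapping theorem applied to the sup functional.
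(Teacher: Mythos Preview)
Your proposal is correct and follows essentially the same route as the paper: establish the uniform convergence $\sup_u\bigl|\tfrac1n{\cal T}_n(u)-{\cal Z}(u,\theta_*)^\top\Sigma_*^{-1}{\cal Z}(u,\theta_*)\bigr|\to^{P_*}0$, identify $u_*$ as the unique maximizer of the limit, and invoke an argmax--consistency result (the paper cites Corollary~3.2.3 of Van der Vaart and Wellner \cite{Vaa-W-96}). The only cosmetic difference is that the paper first asserts the joint uniform convergence $\sup_{u,\theta}|\BZ_n(u,\theta)-{\cal Z}(u,\theta)|\to^{P_*}0$ and then plugs in $\hat\theta_n\to^{P_*}\theta_*$, whereas you work directly with $\BZ_n(u,\hat\theta_n)$; your explicit verification that $g$ is strictly increasing on $[0,u_*]$ and strictly decreasing on $[u_*,1]$ is a welcome addition that the paper leaves implicit.
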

\begin{proof}
Observe that 
$$
\sup_{u, \theta} \left | \BZ_n(u, \theta) -{\cal Z}(u,\theta)\right | \to^{P_*} 0.
$$
Remembering that $\hat\theta_n\to^{P_*} \theta_*$ by Lemma 3.6 
we have
$$
\sup_{u} \left | \BZ_n(u,  \hat\theta_n) -{\cal Z}(u,\theta_*)\right | \to^{P_*} 0.
$$
Since the condition
$$
\sup_u \left| \frac{{\cal T}_n(u) }{n} -{\cal Z}(u, \theta_*)^T \Sigma_*^{-1} {\cal Z}(u,\theta_*) \right| \to ^{d} 0
$$
 holds, and  
 the maximizer of $u \mapsto {\cal Z}(u, \theta_*)^T \Sigma_*^{-1} {\cal Z}(u,\theta_*)$ is $u_*$,
the result follows by Corollary 3.2.3 of Van der Vaart and Wellner \cite{Vaa-W-96}. 
\end{proof}

\section{Example and simulation study}
\label{sim:sect}
We apply our method to test if there is change point in a Gamma model. 
Let  $X, X_1, \ldots, X_n$ be i.i.d. Gamma random variables. For  $\theta=(\alpha, \lambda)^T$, $\alpha>0$, $\lambda>0$, the density is given by
$$
f(x; \theta )= \frac{\lambda^{\alpha}}{\Gamma(\alpha)}x^{\alpha-1} e^{- \lambda x }, \quad \forall x>0,
$$
where $\Gamma (\alpha )=\int_{0}^{\infty }t^{\alpha -1}e^{-t}dt$. Define $\psi :  \BR \to   \BR^2$ by $\psi(x)=(x, x^2)^\top$.  
We have
$$
e(\theta) =\begin{bmatrix}
E_\theta(X)\\
E_\theta(X^2)\\
\end{bmatrix}= 
\begin{bmatrix}
 \frac{\alpha}{\lambda}\\
 \frac{\alpha(\alpha+1)}{\lambda^2}\\
 \end{bmatrix}.
 $$
 Define $\bar X_n= \frac{1}{n} \sum_{j=1}^n X_j$ and $\bar X_n^2= \frac{1}{n} \sum_{j=1}^n X_j^2$. The moment estimator is given by  
$$
\hat \theta_n=
\begin{bmatrix}
\hat \alpha_n\\
\hat \lambda_n\\
\end{bmatrix}=
\begin{bmatrix}
\frac{(\bar X_n)^2}{\bar X_n^2-(\bar X_n)^2}\\
 \frac{\hat \alpha_n}{\bar X_n}
\end{bmatrix}.
$$
When  the null hypothesis is rejected, we estimate the chance point with the proposed statistics $\hat u_n$. 
We simulate $n$ values  of a Gamma distribution, for different values of $n$, to asses the asymptotic results we have presented in the previous Sections. 

First of all, we estimate the empirical size of the test under the Null hypothesis for different values of the parameters of the Gamma distribution.  Then, we compute the empirical power of the test under the alternative that there is a change point in the parameter. Finally we study the consistency of the change point estimator. 

The set up of the design of our simulation study is the following. The number of the Monte Carlo experiments is $m=10000$.
In the simulation study we set the change point in three different points, respectively $u_*= 0.50, 0.75, 0.90$. Moreover we choose different values of the parameters for the change point. 
To study the asymptotic behavior of the test statistics under the Null and the Alternative Hypothesis and to study the consistency of the estimator of the change point estimator we set three different values of the number of observations, respectively $n=50, 100, 500$.    

The level of the test  is fixed at $\varepsilon=0.05$. The critical values of the test statistics have been reported in  Lee {\em et al.} \cite{Lee_et_al}. For the given level the critical value is 
$c_\varepsilon=2.408$.  
\subsubsection*{Results}
The empirical size of the test is  less and closed to the theoretical value for any $n$ and any choice of the parameters.  See Table 1.

\begin{table}[ht]
\centering
\begin{tabular}{ll|rrr}
  \hline
  & & $n=50$ &  $n=100$&  $n=500$ \\ 
  \hline
$\alpha=1$& $\lambda=1$ & 0.0233 & 0.0276 & 0.0429 \\ 
$\alpha=1$&$\lambda=0.01$ & 0.0217 & 0.0291 & 0.0374 \\ 
 $\alpha=2$& $\lambda=1$ & 0.0274 & 0.0307 & 0.0432 \\ 
   \hline
\end{tabular}
\caption{Empirical size of the test for different values of the parameters and different number of observations $n$.  }

\end{table}

Regarding the empirical power, it increases and it reaches 1 as $n$ increases. For example in the case reported in Table 2, the test reaches empirical power equal 1 for $n=100$  when the change point is settled at $u_*=0.50$. The empirical power is 0.995 for $u_*=0.75$ and only 0.170 for   $u_*=0.90$. Anyway in the worst case $u_*=0.90$ the empirical power reaches 1 for $n=500$ as the asymptotic result suggest. 

\begin{table}[ht]
\centering
\begin{tabular}{l|lll}
  \hline
$u_*$ & $n=50$ &  $n=100$&  $n=500$ \\ 
  \hline
$0.50$ & 0.9795 & 1 & 1 \\ 
  $0.75$ & 0.6813 & 0.9953 & 1 \\ 
  $0.90$ & 0.0650 & 0.1696 & 1 \\ 
   \hline
\end{tabular}
\caption{Empirical power of the test for different values of $n$. The parameter $\alpha=1$ while the parameter $\lambda$ change from 0.01 to 0.05 at the point $u_*$.  }

\end{table}
Moreover the empirical power convergence to 1 is reached for lower values of $n$ as the distance between the parameter increase. This  can be seen in Tables 3 and 4, where the parameter $\alpha$ change  from 1 to 2 and 4 respectively. 

\begin{table}[ht]
\centering
\begin{tabular}{l|lll}
  \hline
$u_*$  & $n=50$ &  $n=100$&  $n=500$ \\ 
  \hline
$0.50$ & 0.67 & 0.96 & 1 \\ 
 $0.75$ & 0.68 & 0.99 & 1 \\ 
$0.90$ & 0.07 & 0.17 & 1 \\ 
   \hline
\end{tabular}
\caption{Empirical power of the test for different values of $n$. The parameter $\lambda=1$ while the parameter $\alpha$ change from 1 to 2 at the point $u_*$.  }
\end{table}

\begin{table}[ht]
\centering
\begin{tabular}{l|lll}
  \hline
 $u_*$ & $n=50$ &  $n=100$&  $n=500$ \\ 
  \hline
$0.50$ & 1 & 1 & 1 \\ 
 $0.75$ & 0.69 & 1& 1 \\ 
  $0.90$ & 0.06 & 0.17 & 1 \\ 
   \hline
\end{tabular}
\caption{Empirical power of the test for different values of $n$. The parameter $\lambda=1$ while the parameter $\alpha$ change from 1 to 4 at the point $u_*$.  }
\end{table}

As discussed in Section \ref{sub3} we propose an estimator for the change point $u_*$. 
The estimator $\hat u_n$ is computed for any trajectory as the argument where the test statistics attains its maximum. In the Figure \ref{fig_1} the histogram of the 10000 values of $\hat u_n$ is plotted for $n=500$ for the change point reported reported in Table 2, where the parameter $\alpha$ remain unchanged whereas the parameter $\lambda$ change from $0.01$ to $0.05$. 

\begin{figure}[h]
\label{fig_1}
\includegraphics[height=0.40\textheight,width=\textwidth]{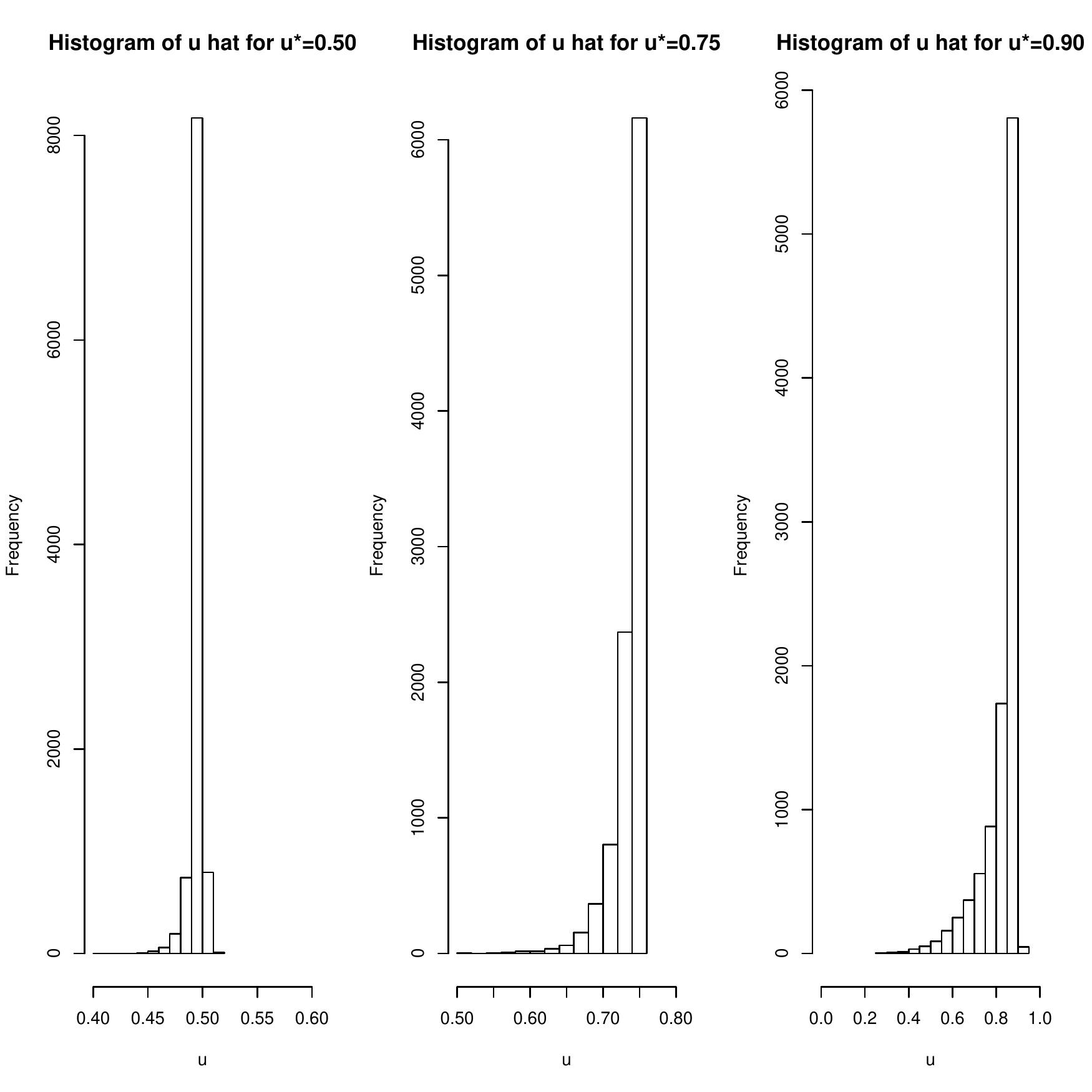}
\caption{Histogram of 10000 values of $\hat u_n$ for $n=500$ for different values of $u_*$}
\end{figure}

This is a case where the empirical power is 1, so the change point is always detected and it is correctly estimated. Indeed, it can be seen that for the three values of $u_*$ in each histogram its maximum is attained in correspondence of the class that contain the true value $u_*$. 
In Table 5, the mean, the standard deviation (sd) and the estimated value for the root mean square error (RMSE) for the estimator $\hat u_n$ are computed for the three different values of $u_*$ when $n=500$,  the parameter $\alpha=1$ while the parameter $\lambda$ changes from 0.01 to 0.05 at the point $u_*$.

\begin{table}[h]
\centering
\begin{tabular}{l|llc}
  \hline
 $u_*$ & mean $\hat u_n$ & sd $\hat u_n$ & est.  RMSE $\hat u_n$ \\ 
  \hline
0.50 & 0.497 & 0.006 & 0.007 \\ 
  0.75 & 0.737 & 0.022 & 0.025 \\ 
  0.90 & 0.831 & 0.090 & 0.114 \\ 
   \hline
\end{tabular}
\caption{True value $u_*$, mean, standard deviation (sd) and estimated value for the root mean square error (RMSE) for the estimator $\hat u_n$. Here $n=500$,  the parameter $\alpha=1$ while the parameter $\lambda$ changes from 0.01 to 0.05 at the point $u_*$. 
}
\end{table}

The mean value is closer to the true value as far as the change point is closer to 0.50. Moreover the variability (measured with the standard deviation) increases as the change point approaches the end of the period of observation. The values of the estimated RMSE closed to the standard deviation are an evidence that the estimator is unbiased. 

All the simulation results  presented in this section are consistent with the theoretical results. We can conclude the our procedure is able to establish if there is a change point in our model and when the test reject the null hypothesis the change point can be  estimate without big error.

\vskip 20pt
\par\noindent
{\bf Acknowledgements.} 
This work was supported by the Department of Management,
Information and Production Engineering, Grant 2017 (I.N.) and by Grant-in-Aid for Scientific Research (C), 15K00062, 18K11203, from Japan Society for the Promotion of Science (Y.N.).

\end{document}